\newcommand{\TITLE}{Different degrees of non-compactness for optimal Sobolev embeddings}
\title{\TITLE}
\author{Jan Lang and Zden\v ek Mihula}
\address{Jan Lang, Department of Mathematics, The Ohio State University, 231 West 18th Avenue, Columbus, OH 43210-1174}
\email{lang.162@osu.edu}
\urladdr{\href{https://orcid.org/0000-0003-1582-7273}{0000-0003-1582-7273}}
\address{Zden\v ek Mihula, Czech Technical University in Prague, Faculty of Electrical Engineering, Department of Mathematics, Technick\'a~2, 166~27 Praha~6, Czech Republic}
\email{mihulzde@fel.cvut.cz}
\urladdr{\href{https://orcid.org/0000-0001-6962-7635}{0000-0001-6962-7635}}
\date{\today}
\theoremstyle{plain}
\newtheorem{theorem}{Theorem}[section]
\newtheorem{proposition}[theorem]{Proposition}
\newtheorem{lemma}[theorem]{Lemma}
\theoremstyle{definition}
\newtheorem{remark}[theorem]{Remark}
\newcommand{\R}{\mathbb R}
\newcommand{\N}{\mathbb N}
\newcommand{\rd}{\R^d}
\newcommand{\Pol}{\mathcal P}
\newcommand{\dx}{{\fam0 d}}
\renewcommand{\d}[1]{\,\dx #1}
\DeclareMathOperator{\spt}{supp}
\DeclareMathOperator{\diam}{diam}
\DeclareMathOperator{\rank}{rank}
\newcommand{\myref}[2]{\hyperref[#2]{#1~\ref*{#2}}}
\numberwithin{equation}{section}
\subjclass[2020]{46E35, 47B06, 46B50}
\keywords{Sobolev spaces, optimal spaces, compactness, Bernstein numbers, singular operators}
\thanks{This research was partly supported by the project OPVVV CAAS CZ.02.1.01/0.0/0.0/16\_019/0000778, and by the grant P201/21-01976S of the Czech Science Foundation.}
\begin{document}
\setcitestyle{numbers}
\bibliographystyle{plainnat}

\begin{abstract}
The structure of non-compactness of optimal Sobolev embeddings of $m$-th order into the class of Lebesgue spaces and into that of all rearrangement-invariant function spaces is quantitatively studied. Sharp two-sided estimates of Bernstein numbers of such embeddings are obtained. It is shown that, whereas the optimal Sobolev embedding within the class of Lebesgue spaces is finitely strictly singular, the optimal Sobolev embedding in the class of all rearrangement-invariant function spaces is not even strictly singular.
\end{abstract}
\maketitle


\section{Introduction}

Sobolev spaces and their embeddings into Lebesgue or Lorentz spaces (on an open set $\Omega \subseteq {\mathbb{R}}^d$) keep a prominent position in the theory of partial differential equations, and any information about structure of such embeddings is far-reaching.

There is a vast amount of literature devoted to study of conditions under which Sobolev embeddings are compact. Quality of compactness is often studied by the speed of decay of different $s$-numbers, which is connected to spectral theory of corresponding differential operators and provides estimates of the growth of their eigenvalues (see \cite{ET:96}). However, much less literature is devoted to study of the structure of non-compact Sobolev embeddings, which is related to the shape of essential spectrum (see \cite{EE:87}).

There are three common ways under which Sobolev embeddings can become non-compact:
\begin{enumerate}[(i)]
\item when the underlying domain is unbounded (see~\cite{AFbook}, cf.~\cite{EML:21});
\item when the boundary $\partial\Omega$ of $\Omega$ is too irregular (see~\cite{KMR:01, LM:08, Mabook, MP:97});
\item when the target space is optimal or ``almost optimal'' (see~\cite{KP:08, LMP:22} and references therein).
\end{enumerate}

In this paper we will focus on the third case. We will obtain new information about the structure of non-compactness of two optimal Sobolev embeddings\textemdash namely
\begin{align}
I\colon V_0^{m, p}(\Omega) \to L^{p^*}(\Omega) \label{prel:optimal_Leb}
\intertext{and}
I\colon V_0^{m, p}(\Omega) \to L^{p^*, p}(\Omega), \label{prel:optimal_Lor}
\end{align}
where $1\leq m < d$, $p\in[1, d/m)$ and $p^*=dp/(d - mp)$. Here $\Omega$ is a bounded open set, and the subscript $0$ means that the (ir)regularity of $\Omega$ is immaterial (see~Section~\ref{sec:prel} for precise definitions). The target spaces in both embeddings are in a sense optimal. The Lebesgue space $L^{p^*}(\Omega)$ is well known to be the optimal target space in \eqref{prel:optimal_Leb} among all Lebesgue spaces\textemdash that is, $L^{p^*}(\Omega)$ is the smallest Lebesgue space $L^q(\Omega)$ such that $I\colon V_0^{m, p}(\Omega) \to L^{q}(\Omega)$ is valid. However, it is also well known (\cite{P:66}) that \eqref{prel:optimal_Leb} can be improved to \eqref{prel:optimal_Lor} if one allows not only Lebesgue spaces but also Lorentz spaces, which form a richer class of function spaces. Since $L^{p^*, p}(\Omega) \subsetneq L^{p^*}(\Omega)$, the latter is indeed an improvement. Furthermore, the Lorentz space $L^{p^*, p}(\Omega)$ is actually the optimal target space in \eqref{prel:optimal_Lor} among all rearrangement-invariant function spaces (see~\cite{KP:06})\textemdash that is, if $Y(\Omega)$ is a rearrangement-invariant function space (e.g., a Lebesgue space, a Lorentz space, or an Orlicz space, to name a few customary examples) such that $I\colon V_0^{m, p}(\Omega) \to Y(\Omega)$ is valid, then $L^{p^*, p}(\Omega) \subseteq Y(\Omega)$.

Not only are both embeddings \eqref{prel:optimal_Leb} and \eqref{prel:optimal_Lor} non-compact, but they are also in a sense ``maximally non-compact'' as their measures of non-compactness (in the sense of \cite[Definition~2.7]{EE:87}) are equal to their norms. This was proved in \cite{B:20, H:03}. Moreover, even when $L^{p^*}(\Omega)$ is enlarged to the weak Lebesgue space $L^{p^*, \infty}(\Omega)$, which satisfies $L^{p^*, p}(\Omega) \subsetneq L^{p^*}(\Omega) \subsetneq L^{p^*, \infty}(\Omega)$, the resulting Sobolev embedding is still maximally non-compact. This was proved in \cite{LMOP:21}. These results may suggest that the ``quality'' and the structure of these non-compact embeddings should be the same.

However, there are other possible points of view on the quality of non-compactness. One of them is the question of whether a non-compact Sobolev embedding is strictly singular or even finitely strictly singular. Strictly singular operators and finitely strictly singular ones are important classes of operators as spectral properties of such operators are very close to those of compact ones. In this regard, it follows from \cite{BG:89} that the Sobolev embedding $I\colon V_0^{1,1}(\Omega) \to L^{d/(d - 1)}(\Omega)$, which is a particular case of \eqref{prel:optimal_Leb} with $m = p = 1$, is finitely strictly singular. Furthermore, it was also shown there that the almost optimal critical Sobolev embedding $I\colon V_0^{d, 1}((0,1)^d) \to L^{\infty}((0,1)^d)$ is finitely strictly singular, too. Finally, the same was proved in \cite{LM:19} for the optimal first-order Sobolev embedding into the space of continuous functions on a cube. These results suggest a hypothesis that non-compact Sobolev embeddings could be finitely strictly singular or at least strictly singular.

In this paper we will show that this hypothesis is correct for the ``almost optimal'' Sobolev embedding \eqref{prel:optimal_Leb}, but it is wrong for the ``really optimal'' Sobolev embedding \eqref{prel:optimal_Lor}. In other words, \eqref{prel:optimal_Lor} is an example of a Sobolev embedding whose target space is optimal among all rearrangement-invariant function spaces that is not a singular map (i.e., there exists an infinite dimensional subspace on which the embedding is invertible), but if the target space is slightly enlarged to an ``almost optimal'' one (i.e., the target space is optimal only in the smaller class of Lebesgue spaces), then the resulting Sobolev embedding \eqref{prel:optimal_Leb} is finitely strictly singular (i.e., its Bernstein numbers are decaying to zero). In the case of \eqref{prel:optimal_Leb}, we prove a two-sided estimate of the Bernstein numbers corresponding to the embedding\textemdash the estimate is sharp up to multiplicative constants. In the case of \eqref{prel:optimal_Lor}, we show that all its Bernstein numbers coincide with the norm of the embedding.

The paper is structured as follows. In the next section, we recall definitions and notation used in this paper, as well as some background results. In \myref{Section}{sec:main}, we start with a couple of auxiliary results, which may be of independent interest, then we focus on the ``almost optimal'' embedding (\myref{Theorem}{thm:optimal_Sobolev_Lebesgue}), and finally on the ``really optimal'' one (\myref{Theorem}{thm:optimal_Sobolev_Lorentz}).


\section{Preliminaries} \label{sec:prel}

Here we establish the notation used in this paper, and recall some basic definitions and auxiliary results.

Any rule $s\colon T\to\left\{s_n(T)\right\}_{n=1}^\infty$ that assigns each bounded linear operator $T$ from a Banach space $X$ to a Banach space $Y$ (we shall write $T\in B(X,Y)$) a sequence $\left\{s_n(T)\right\}_{n=1}^\infty$ of nonnegative numbers having, for every $n\in\N$, the following properties:
\begin{itemize}
\item[(S1)] $\|T\|=s_1(T)\geq s_2(T)\geq\cdots\geq0$;
\item[(S2)] $s_n(S+T)\leq s_n(S)+\|T\|$ for every $S\in B(X,Y)$;
\item[(S3)] $s_n(BTA)\leq\|B\|s_n(T)\|A\|$ for every $A\in B(W,X)$ and $B\in B(Y,Z)$, where $W,Z$ are Banach spaces;
\item[(S4)] $s_n(I\colon E\to E)=1$ for every Banach space $E$ with $\dim E\geq n$;
\item[(S5)] $s_n(T)=0$ if $\rank T< n$;
\end{itemize}
is called a \emph{strict $s$-number}. Notable examples of strict $s$-numbers are \emph{the approximation numbers} $a_n$, \emph{the Bernstein numbers} $b_n$, \emph{the Gelfand numbers} $c_n$, \emph{the Kolmogorov numbers} $d_n$, \emph{the isomorphism numbers} $i_n$, or \emph{the Mityagin numbers} $m_n$. For their definitions and the difference between strict $s$-numbers and `non-strict' $s$-numbers, we refer the reader to \citep[Chapter~5]{EL:11} and references therein. We say that a (strict) $s$-number is \emph{injective} if the values of $s_n(T)$ do not depend on the codomain of $T$. More precisely, $s_n(J_N^Y \circ T) = s_n(T)$ for every closed subspace $N\subseteq Y$ and every $T\in B(X, N)$, where $J_N^Y\colon N \to Y$ is the canonical embedding operator.

In this paper, we will only need the definition of the Bernstein numbers. The $n$-th Bernstein number $b_n(T)$ of $T\in B(X,Y)$ is defined as
\begin{equation*}
b_n(T)=\sup_{X_n \subseteq X} \inf_{\substack{x\in X_n\\ \|x\|_{X} = 1}}\|Tx\|_{Y},
\end{equation*}
where the supremum extends over all $n$-dimensional subspaces of $X$. The Bernstein numbers are the smallest injective strict $s$-numbers (\citep[Theorem~4.6]{P:74}), that is,
\begin{equation}\label{prel:bernstein_numbers_smallest_injective}
b_n(T) \leq s_n(T)
\end{equation}
for every injective strict $s$-number $s$, for every $T\in B(X,Y)$, and for every $n\in\N$.

An operator $T\in B(X,Y)$ is said to be \emph{strictly singular} if there is no infinite
dimensional closed subspace $Z$ of $X$ such that the restriction $T\rvert_{Z}$ of
$T$ to $Z$ is an isomorphism of ${Z}$ onto $T(Z).$ Equivalently, for
each infinite dimensional (closed) subspace $Z$ of $X$,
\begin{equation*}
\inf\left\{  \left\Vert Tx\right\Vert _{Y}\colon \left\Vert x\right\Vert _{X}=1,x\in
Z\right\}  = 0.
\end{equation*}
An operator $T\in B(X,Y)$ is said to be \emph{finitely strictly singular} if it has the property that given any $\varepsilon>0$ there exists
$N(\varepsilon)\in\mathbb{N}$ such that if $E$ is a subspace of $X$ with $\dim
E\geq N(\varepsilon),$ then there exists $x\in E$, $\left\Vert
x\right\Vert _{X}=1$, such that $\left\Vert Tx\right\Vert _{Y}\leq
\varepsilon$. This can be expressed in terms of the Bernstein numbers of $T$. The operator $T$ is finitely strictly singular if and only if
\begin{equation*}
\lim_{n \to \infty} b_n(T) = 0.
\end{equation*}
The relations between these two notions and that of compactness of $T$ are illustrated by the following diagram:
\begin{equation*}
\text{$T$ is compact }\Longrightarrow\text{ }\text{$T$ is finitely strictly singular
}\Longrightarrow\text{ }\text{$T$ is strictly singular};
\end{equation*}
moreover, each reverse implication is false in general. For further details and
general background information concerning these matters we refer the interested reader to
\cite{AK:16}, \cite{LR-P:14} and \cite{P:07}.

Throughout the rest of this section, $X$ denotes a Banach space. The operator norm of the projection $Q\colon L^2([0,1], X) \to L^2([0,1], X)$ defined as
\begin{equation*}
Qf(t) = \sum_{j = 1}^\infty \left( \int_0^1 f(s) r_j(s) \d{s} \right) r_j(t),\ f\in L^2([0,1], X),
\end{equation*}
is called the \emph{$K$-convexity constant} of $X$. Here $\{r_j\}_{j = 1}^\infty$ are the Rademacher functions. The $K$-convexity constant of $X$ is denoted by $K(X)$. If $\dim X = n$, then (e.g., see~\cite[Theorem~6.2.4]{A-AGM:15})
\begin{equation}\label{prel:pisier_inequality}
K(X) \leq c \log(1 + d(X, \ell_2^n)).
\end{equation}
Here $c$ is an absolute constant and $d(X, \ell_2^n)$ is the Banach--Mazur distance, that is,
\begin{equation*}
d(X, \ell_2^n) = \inf\{\|T\|\|T^{-1}\|\colon \text{$T$ is a linear isomorphism of $X$ onto $\ell_2^n$}\}.
\end{equation*}

We say that $X$ is of \emph{cotype $2$} if there is a constant $\gamma$ such that
\begin{equation*}
\left(\sum_{j = 1}^m \|x_j\|_{X}^2 \right)^{1/2} \leq \gamma \int_0^1 \Big \|\sum_{j = 1}^m x_j r_j(t) \Big\|_X \d{t}
\end{equation*}
for every $\{x_j\}_{j=1}^m\subseteq X$, $m\in \N$. We denote the least such a $\gamma$ by $C_2(X)$.

Let $Y$ be a Banach space such that $X \subseteq Y$. We say that the inclusion is \emph{$2$-absolutely summable} if there is a constant $\gamma$ such that
\begin{equation*}
\left( \sum_{j = 1}^m \|x_j\|_{Y}^2 \right)^{1/2} \leq \gamma \sup\left\{ \left( \sum_{j = 1}^m |x^*(x_j)|^2 \right)^{1/2}\colon \|x^*\|_{X^*} \leq 1 \right\}
\end{equation*}
for every $\{x_j\}_{j=1}^m\subseteq X$, $m\in \N$. We denote the least such a $\gamma$ by $\pi_2(X\hookrightarrow Y)$.

Let $A,B$ be subsets of $X$. We denote the minimum number of points $x_1,\dots, x_m\in X$ such that
\begin{equation}\label{prel:entropy_def}
A \subseteq \bigcup_{j=1}^m (x_j + B)
\end{equation}
by $E(A, B)$. In general, it may happen that $E(A, B) = \infty$, but in our case it will always be a finite number. $\bar{E}(A, B)$ denotes the minimum number of points $x_1,\dots, x_m\in A$ such that \eqref{prel:entropy_def} holds.

Let $(R, \mu)$ be a nonatomic measure space and $p\in[1, \infty)$. As usual, $L^p(R, \mu)$ denotes the \emph{Lebesgue space} endowed with the norm
\begin{equation*}
\|f\|_{L^p(R, \mu)} = \left( \int_R |f|^p \d{\mu} \right)^{\frac1{p}},\ f\in L^p(R, \mu).
\end{equation*}
Let $q\in[1, p]$. The \emph{Lorentz space} $L^{p,q}(R, \mu)$ is the Banach space of all $\mu$-measurable functions $f$ in $R$ for which the functional
\begin{equation*}
\|f\|_{L^{p,q}(R, \mu)} = \left(\int_0^\infty t^{\frac{q}{p} - 1} f^*(t)^q \d{t} \right)^{\frac1{q}}
\end{equation*}
is finite\textemdash the norm on $L^{p,q}(R, \mu)$ is given by the functional. The function $f^*\colon (0, \infty) \to [0, \infty]$ is the (right-continuous) \emph{nonincreasing rearrangement} of $f$, that is,
\begin{equation*}
f^*(t) = \inf\{\lambda > 0\colon \mu(\{x\in R\colon |f(x)| > \lambda \})\leq t \},\ t\in(0, \infty).
\end{equation*}
Note that $f^*(t) = 0$ for every $t\in[\mu(R), \infty)$. Furthermore, we have (see~\cite[Chapter~2, Proposition~1.8]{BS})
\begin{equation*}
\|\cdot\|_{L^{p,p}(R, \mu)} = \|\cdot\|_{L^{p}(R, \mu)}.
\end{equation*}
When $R\subseteq\rd$ and $\mu$ is the $d$-dimensional Lebesgue measure, we write $L^p(R)$ and $L^{p,q}(R)$ instead of $L^p(R, \mu)$ and $L^{p,q}(R, \mu)$, respectively, and $|R|$ instead of $\mu(R)$ for short. We refer the interested reader to \cite[Chapter~8]{PKJF:13} for more information on Lorentz spaces. Assume that $(R, \mu)$ is probabilistic. We denote by $L^{\psi_2}(R,\mu)$ the \emph{Orlicz space} generated by the Young function
\begin{equation*}
\psi_2(t) = \exp(t^2) - 1,\ t\in[0,\infty).
\end{equation*}
The norm on $L^{\psi_2}(R,\mu)$ is given by
\begin{equation*}
\|f\|_{L^{\psi_2}(R,\mu)} = \inf\left\{ \lambda > 0\colon \int_R \psi_2\left(\frac{|f(x)|}{\lambda}\right) \d{\mu(x)} \leq 1 \right\}.
\end{equation*}
We have (e.g., see \cite[Lemma~3.5.5]{A-AGM:15})
\begin{equation}\label{prel:expL2_extrapolation_norm}
c\sup_{p\in[1, \infty)}\frac{\|f\|_{L^p(R,\mu)}}{\sqrt{p}} \leq \|f\|_{L^{\psi_2}(R,\mu)} \leq \tilde{c}\sup_{p\in[1, \infty)}\frac{\|f\|_{L^p(R,\mu)}}{\sqrt{p}}
\end{equation}
for every $f\in L^{\psi_2}(R,\mu)$. Here $c$ and $\tilde{c}$ are absolute constants. In particular, $L^{\psi_2}(R,\mu)$ is continuously embedded in $L^p(R,\mu)$ for every $p\in[1, \infty)$.

Throughout the entire paper, we assume that $d\in\N$, $d\geq2$. Let $G\subseteq\rd$ be a nonempty bounded open set. For $m\in\N$ and $p\in[1, \infty)$, $V^{m, p}(G)$ denotes the vector space of all $m$-times weakly differentiable functions in $G$ whose $m$-th order weak derivatives belong to $L^p(G)$. By $V_0^{m,p}(G)$ we denote the Banach space of all functions from $V^{m,p}(G)$ whose continuation by $0$ outside $G$ is $m$-times weakly differentiable in $\rd$ equipped with the norm  $\|u\|_{V_0^{m, p}(G)} = \| |\nabla^m u|_{\ell_p} \|_{L^p(G)}$. By $\nabla^m$ we denote the vector of all $m$-th order weak derivatives. When $G$ is regular enough (for example, Lipschitz), $V_0^{m, p}(G)$ coincides with the usual Sobolev space $W_0^{m, p}(G)$, up to equivalent norms.


\section{Different degrees of noncompactness}\label{sec:main}
An important property of both optimal Sobolev embeddings \eqref{prel:optimal_Leb} and \eqref{prel:optimal_Lor}, which we will exploit in both cases, is that their norms are homothetic invariant.

\begin{proposition}\label{prop:scaling_invariance}
Let $\Omega\subseteq\rd$ be a nonempty bounded open set, $m\in\N$, $1\leq m < d$, and $p\in[1, d/m)$. Let $p^* = dp/(d-mp)$ and $q\in[p, p^*]$. Denote by $I$ the identity operator $I\colon V_0^{m,p}(\Omega) \to L^{p^*, q}(\Omega)$. For every $0 < \lambda < \|I\|$ and every $\varepsilon > 0$, there exist a system of functions $\{u_j\}_{j=1}^\infty\subseteq V_0^{m,p}(\Omega)$ and a system of open balls $\{B_{r_j}(x_j)\}_{j=1}^\infty \subseteq \Omega$ with the following properties.
\begin{enumerate}[(i)]
	\item The balls $\{B_{r_j}(x_j)\}_{j=1}^\infty$ are pairwise disjoint.
	\item $\|u_j\|_{V_0^{m,p}(\Omega)} = 1$ and $\|u_j\|_{L^{p^*, q}(\Omega)} = \lambda$ for every $j\in\N$.
	\item $\spt u_j \subseteq B_{r_j}(x_j)$ for every $j\in\N$.
	\item For every sequence $\{\alpha_j\}_{j = 1}^\infty \subseteq \R$, we have
	\begin{equation}\label{prop:scaling_invariance:extremal_system}
		\Bigg \|\sum_{j = 1}^\infty \alpha_j u_j \Bigg\|_{L^{p^*, q}(\Omega)} \geq \frac{\lambda}{(1 + \varepsilon)^{\frac1{q}}} \Bigg( \sum_{j=1}^\infty |\alpha_j|^q \Bigg)^\frac1{q}.
	\end{equation}
\end{enumerate}
\end{proposition}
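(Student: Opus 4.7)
The proof will exploit the homothetic invariance of the quotient of the two norms. Since $p^{*} = dp/(d - mp)$, we have $d/p - m = d/p^*$, so both $\|\cdot\|_{V_0^{m, p}(\Omega)}$ and $\|\cdot\|_{L^{p^*, q}(\Omega)}$ transform identically under $u(x)\mapsto r^{-d/p^*}u((x - x_0)/r)$. Thus, if $u_0$ is compactly supported inside $\Omega$, translated-dilated-renormalised copies placed in any small enough ball inside $\Omega$ preserve both norms exactly. Properties (i)--(iii) will follow from this invariance once a single base function with the correct ratio is produced; (iv) is where the substantial work lies.

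To obtain the base function, I would exploit $\lambda < \|I\|$: there exists $v\in V_0^{m, p}(\Omega)$, compactly supported, with $\|v\|_{V_0^{m, p}} = 1$ and $\|v\|_{L^{p^*, q}} > \lambda$. On the other hand, highly oscillating bumps $w(x) = \psi(x)\sin(N x_1)$ (with a smooth cutoff $\psi$) satisfy $\|w\|_{L^{p^*, q}}/\|w\|_{V_0^{m, p}}\to 0$ as $N\to\infty$, because the $m$-th derivative grows like $N^m$ while the Lorentz norm stays bounded. Taking $w$ with support disjoint from $v$ and applying the intermediate value theorem to the continuous path $t\mapsto (tv + (1 - t)w)/\|tv + (1 - t)w\|_{V_0^{m, p}}$ produces a compactly supported $u_0$ with $\|u_0\|_{V_0^{m, p}} = 1$ and $\|u_0\|_{L^{p^*, q}} = \lambda$. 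After choosing pairwise disjoint balls $B_{r_j}(x_j)\subseteq\Omega$ with $r_j\to 0$ sufficiently rapidly (the exact rate to be fixed in the next step), setting $u_j$ to be the translated, dilated and renormalised copy of $u_0$ supported in $B_{r_j}(x_j)$ makes (i)--(iii) immediate.

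The crux is (iv). Combining the layer-cake representation
\[
\Bigl\|\sum_j \alpha_j u_j\Bigr\|_{L^{p^*, q}}^q = p^{*}\int_0^\infty s^{q - 1}\Bigl|\Bigl\{\bigl|\sum_j\alpha_j u_j\bigr| > s\Bigr\}\Bigr|^{q/p^*}\d s
\]
with pairwise disjointness, the distribution function of the sum equals $\sum_j$ of the individual distribution functions, and each individual distribution function is a rescaling of $|\{|u_0| > \cdot\}|$ dictated by the homothety relating $u_j$ to $u_0$. I would then partition the $s$-axis into bands separated by the ordered values $\|\alpha_j u_j\|_\infty$, and on each band apply the pointwise inequality $(\sum_k c_k)^{q/p^*}\geq c_\ell^{q/p^*}$ (valid since $q/p^*\leq 1$), with $\ell$ chosen to be the summand whose $L^\infty$-norm sits at the top of the band; a change of variable then recovers $|\alpha_{(\ell)}|^q\lambda^q$ up to a tail that vanishes as the consecutive ratios of radii tend to zero. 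Summing over the bands and separately handling the ``tied'' regime (several $\|\alpha_j u_j\|_\infty$ comparable, where the band decomposition degenerates but the direct estimate $(\sum_j r_j^d)^{q/p^*}$ versus $\sum_j r_j^{qd/p^*}$ becomes tight for rapidly decaying radii) produces \eqref{prop:scaling_invariance:extremal_system} for radii shrinking fast enough depending on $\varepsilon$ and on $u_0^*$.

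The main obstacle is precisely this lower bound in (iv). For $q < p^*$ the Lorentz norm $L^{p^*, q}$ is strictly subadditive on disjointly supported functions, in sharp contrast to $L^{p^*}$ (the case $q = p^*$, for which additivity is exact); hence the $\ell^q$-type lower bound can only hold up to the $(1 + \varepsilon)^{-1/q}$ slack, and this slack has to be extracted from scale separation in a manner that is uniform over \emph{all} coefficient sequences $\{\alpha_j\}$\textemdash including adversarial ones engineered to defeat a naive band decomposition.
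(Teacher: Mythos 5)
Your construction of the base function $u_0$ (IVT between a high-ratio function and a disjointly supported oscillating bump) is a sound way to pin the Lorentz norm to exactly $\lambda$, and your observation that homothety $u\mapsto r^{-d/p^*}u((\,\cdot\,-x_0)/r)$ preserves both norms correctly gives properties (i)--(iii). This differs slightly from the paper, which instead invokes a ``shrinking property'' ($\|I_G\|=\|I\|$ for every open $\emptyset\neq G\subseteq\Omega$, a consequence of the same homothety) and picks a fresh extremizing function inside each ball rather than rescaling one fixed profile; both routes work for (i)--(iii).

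The genuine gap is in (iv). Your band decomposition operates on the $s$-axis (ordered values $\|\alpha_j u_j\|_\infty$), and, as you yourself concede, the decomposition degenerates when several of these quantities tie---which an adversary can force for \emph{any} prescribed radii by choosing $\alpha_j\sim r_j^{d/p^*}$, so that all $\|\alpha_j u_j\|_\infty$ coincide. Your remark that ``$(\sum_j r_j^d)^{q/p^*}$ versus $\sum_j r_j^{qd/p^*}$ becomes tight for rapidly decaying radii'' does not resolve this: it is a statement about characteristic functions, not about the actual profiles, and the required uniformity over all coefficient sequences is left unaddressed. The paper avoids this entirely by decomposing the $t$-axis (the domain of the nonincreasing rearrangement) rather than the $s$-axis, using thresholds $\delta_0>\delta_1>\cdots>0$ chosen \emph{before} any coefficients enter. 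Specifically, $\delta_j$ is picked (by monotone convergence) so that $(1+\varepsilon)\int_{\delta_j}^{\delta_{j-1}}(t^{1/p^*-1/q}u_j^*(t))^q\,\dx t\geq\lambda^q$, while the next ball is taken small enough that $|B_{r_{j+1}}(x_{j+1})|<\delta_j$, forcing $u_{j+1}^*$ to vanish on $[\delta_j,\infty)$. Then the elementary, coefficient-independent pointwise inequality
\begin{equation*}
\Bigl(\textstyle\sum_j\alpha_j u_j\Bigr)^{\!*}(t)\geq|\alpha_k|\,u_k^*(t)\qquad\text{for }t\in(\delta_k,\delta_{k-1}),
\end{equation*}
which follows immediately from disjointness of supports (the level set of the sum contains that of each summand), yields (iv) by integrating band by band in $t$. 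This $t$-axis decomposition is the missing idea; without something like it your $s$-axis scheme has no mechanism to handle the tied regime uniformly.
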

\begin{proof}
It is known that
\begin{equation}\label{prop:scaling_invariance:shrinking_property}
\|I\| = \|I_{G}\colon V_0^{m,p}(G) \to L^{p^*, q}(G)\| \qquad \text{for every open set $\emptyset\neq G\subseteq \Omega$}.
\end{equation}
Indeed, arguing as in the proof of \cite[Proposition~3.1]{LMOP:21}, we observe that the proof of \eqref{prop:scaling_invariance:shrinking_property} amounts to showing that, if $u\in V_0^{m, p}(B_r(0))$ and $0<s<r$, then
\begin{equation}\label{prop:scaling_invariance:shrinking_property2}
\frac{\|u(\kappa\, \cdot)\|_{L^{p^*,q}(B_s(0))}}{\|u(\kappa\, \cdot)\|_{V_0^{m, p}(B_s(0))}} = \frac{\|u\|_{L^{p^*,q}(B_r(0))}}{\|u\|_{V_0^{m, p}(B_r(0))}},
\end{equation}
where $\kappa = r/s$. It is a matter of simple straightforward computations to show that
\begin{align*}
\|u(\kappa\, \cdot)\|_{L^{p^*,q}(B_s(0))} &= \kappa^{m-\frac{d}{p}} \|u\|_{L^{p^*,q}(B_r(0))}
\intertext{and}
\|\nabla(u(\kappa\, \cdot))\|_{L^{p}(B_s(0))} &= \kappa^{m-\frac{d}{p}} \|\nabla u\|_{L^{p}(B_r(0))},
\end{align*}
whence \eqref{prop:scaling_invariance:shrinking_property2} immediately follows.

We now start with construction of the desired systems. We will use induction. First, using \eqref{prop:scaling_invariance:shrinking_property}, we find a ball $B_{r_1}(x_1)\subseteq \overline{B}_{r_1}(x_1)\subseteq \Omega$ and a function $u_1\in V_0^{m,p}(\Omega)$ such that $\spt u_1 \subseteq B_{r_1}(x_1)$, $\|u_1\|_{L^{p^*,q}(\Omega)} = \lambda$ and $\|u_1\|_{V_0^{m,p}(\Omega)} = 1$. Set $\delta_0 = |B_{r_1}(x_1)|$. By the monotone convergence theorem, there is $0 < \delta_1 < \delta_0$ such that
\begin{equation*}
(1 + \varepsilon) \int_{\delta_1}^{\delta_0} \left( t^{\frac1{p^*} - \frac1{q}} u_1^*(t) \right)^q \d{t} \geq \int_0^{\delta_0} \left( t^{\frac1{p^*} - \frac1{q}} u_1^*(t) \right)^q \d{t} = \|u_1\|_{L^{p^*,q}(\Omega)}^q.
\end{equation*}
Next, assume that we have already found functions $u_j \in V_0^{m,p}(\Omega)$, pairwise disjoint balls $B_{r_j}(x_j)\subseteq \overline{B}_{r_j}(x_j)\subseteq \Omega$, and $0 < \delta_k < \cdots < \delta_1 < \delta_0$, $j = 1, \dots, k$, where $k\in\N$, such that $\|u_j\|_{V_0^{m,p}(\Omega)} = 1$ and $\|u_j\|_{L^{p^*, q}(\Omega)} = \lambda$, $\spt u_j \subseteq B_{r_j}(x_j)$, and
\begin{equation}\label{prop:scaling_invariance:ineq}
(1 + \varepsilon) \int_{\delta_{j}}^{\delta_{j-1}} \left( t^{\frac1{p^*} - \frac1{q}} u_j^*(t) \right)^q \d{t} \geq \|u_j\|_{L^{p^*,q}(\Omega)}^q.
\end{equation}
Take any ball $B_{r_{k+1}}(x_{k+1})$ such that $B_{r_{k+1}}(x_{k+1}) \subseteq \overline{B}_{r_{k+1}}(x_{k+1}) \subseteq \Omega\setminus \bigcup_{j = 1}^k \overline{B}_{r_j}(x_j)$ and $|B_{r_{k+1}}(x_{k+1})| < \delta_k$. Thanks to \eqref{prop:scaling_invariance:shrinking_property}, we find a function $u_{k+1}\in V_0^{m,p}(\Omega)$ such that $\spt u_{k+1} \subseteq B_{r_{k+1}}(x_{k+1})$, $\|u_{k+1}\|_{L^{p^*,q}(\Omega)} = \lambda$ and $\|u_{k+1}\|_{V_0^{m,p}(\Omega)} = 1$. By the monotone convergence theorem again, there is $0 < \delta_{k+1} < \delta_{k}$ such that
\begin{equation*}
(1 + \varepsilon) \int_{\delta_{k+1}}^{\delta_k} \left( t^{\frac1{p^*} - \frac1{q}} u_{k+1}^*(t) \right)^q \d{t} \geq \int_0^{\delta_k} \left( t^{\frac1{p^*} - \frac1{q}} u_{k+1}^*(t) \right)^q \d{t} = \|u_{k+1}\|_{L^{p^*,q}(\Omega)}^q.
\end{equation*}
This finishes the inductive step.

Clearly, the constructed systems $\{u_j\}_{j=1}^\infty\subseteq V_0^{m,p}(\Omega)$ and $\{B_{r_j}(x_j)\}_{j=1}^\infty \subseteq \Omega$ have the properties (i)--(iii), and \eqref{prop:scaling_invariance:ineq} is valid for every $j\in\N$. It remains to verify that (iv) is also valid. Let $\{\alpha_j\}_{j = 1}^\infty \subseteq \R$. Since the functions $\{\alpha_j u_j\}_{j = 1}^\infty$ have pairwise disjoint supports, we have
\begin{equation*}
\left| \left\{x\in \Omega\colon \Bigg| \sum_{j = 1}^\infty \alpha_j u_j(x) \Bigg| > \gamma \right\} \right| = \sum_{j = 1}^\infty \left| \left\{x\in \Omega\colon \left| \alpha_j u_j(x) \right| > \gamma \right\} \right|
\end{equation*}
for every $\gamma > 0$. It follows that
\begin{equation}\label{prop:scaling_invariance:rearrangement_disjoint_supports}
\left( \sum_{j = 1}^\infty \alpha_j u_j \right)^* \geq \sum_{j = 1}^\infty |\alpha_j| u_j^* \chi_{(\delta_j, \delta_{j - 1})}.
\end{equation}
Indeed, suppose that there is $t\in(0, |\Omega|)$ such that
\begin{equation*}
\left( \sum_{j = 1}^\infty \alpha_j u_j \right)^*(t) < \sum_{j = 1}^\infty |\alpha_j| u_j^*(t) \chi_{(\delta_j, \delta_{j - 1})}(t).
\end{equation*}
Plainly, there is a unique index $k$ such that $t \in (\delta_k, \delta_{k - 1})$. By the definition of the nonincreasing rearrangement, there is $\gamma > 0$ such that
\begin{align*}
\left| \left\{x\in \Omega \colon \Bigg| \sum_{j = 1}^\infty \alpha_j u_j(x) \Bigg| > \gamma \right\} \right| \leq t \qquad \text{and} \qquad \gamma < |\alpha_k| u_k^*(t).
\end{align*}
Consequently, using the definition again, we have
\begin{equation*}
\left| \left\{x\in \Omega\colon \left|\alpha_k u_k(x) \right| > \gamma \right\} \right| > t,
\end{equation*}
however. Thus we have reached a contradiction, and so \eqref{prop:scaling_invariance:rearrangement_disjoint_supports} is proved.

Finally, using \eqref{prop:scaling_invariance:ineq} and \eqref{prop:scaling_invariance:rearrangement_disjoint_supports}, we observe that
\begin{align*}
\left \|\sum_{j = 1}^\infty \alpha_j u_j \right\|_{L^{p^*, q}(\Omega)}^q &= \int_0^\infty \left( t^{\frac1{p^*} - \frac1{q}} \left(\sum_{j = 1}^\infty \alpha_j u_j\right)^*(t) \right)^q \d{t} \\
&\geq \int_0^\infty \left( t^{\frac1{p^*} - \frac1{q}} \sum_{j = 1}^\infty |\alpha_j| u_j^*(t)\chi_{(\delta_{j}, \delta_{j-1})}(t) \right)^q \d{t} \\
&\geq \sum_{j = 1}^\infty |\alpha_j|^q \int_{\delta_j}^{\delta_{j-1}} \left( t^{\frac1{p^*} - \frac1{q}}  u_j^*(t) \right)^q \d{t} \\
&\geq \frac1{1 + \varepsilon} \sum_{j = 1}^\infty |\alpha_j|^q \|u_j\|_{L^{p^*,q}(\Omega)}^q \\
&= \frac{\lambda^q}{1 + \varepsilon} \sum_{j = 1}^\infty |\alpha_j|^q. \qedhere 
\end{align*}
\end{proof}

We start with the Lebesgue case. The following lemma of independent interest is a key ingredient for the proof of the fact that the embedding \eqref{prel:optimal_Leb} is finitely strictly singular. Its proof is inspired by that of \cite[Lemma~2.9]{BG:89}.

\begin{lemma}\label{lem:isometry_entropy_estimates}
Let $(R,\mu)$ be a probability measure space and $p\in[1, \infty)$. Let $X_n$ be a $n$-dimensional subspace of $L^p(R, \mu)$. There is a positive $\mu$-measurable function $g$ on $R$ and a linear isometry $L\colon L^p(R, \mu) \to L^p(R, \nu)$ defined as $Lf = g^{-1/p}f$, where $\d{\nu} = g\d{\mu}$, with the following properties. The measure $\nu$ is probabilistic, and in every subspace $Y\subseteq X_n$ with $\dim Y \geq n/2$, there exists a function $h\in Y$ such that 
\begin{align*}
\|h\|_{L^p(R,\mu)} &= 1 \\
\intertext{and}
\sup_{q\in[1, \infty)}\frac{\|Lh\|_{L^q(R,\nu)}}{\sqrt{q}} &\leq C.
\end{align*}
Here $C$ is an absolute constant depending only on $\min\{p, 2\}$.
\end{lemma}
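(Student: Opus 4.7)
The plan is to follow the approach of \cite[Lemma~2.9]{BG:89}: perform a Lewis-type change of density so that $X_n$ sits in a convenient position inside $L^p(R,\nu)$, extract via averaging a subspace of sub-Gaussian vectors of dimension strictly greater than $n/2$, and then meet every subspace $Y$ with $\dim Y \geq n/2$ by a dimension count. Observe that by \eqref{prel:expL2_extrapolation_norm} the conclusion $\sup_q \|Lh\|_{L^q(R,\nu)}/\sqrt{q} \leq C$ is, up to absolute constants, the statement that $Lh \in L^{\psi_2}(R,\nu)$ with norm of order one, so the task reduces to locating such a sub-Gaussian vector inside $L(Y)$ for every admissible $Y$.

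First, I would apply Lewis's theorem to $X_n \subseteq L^p(R,\mu)$. This produces a positive $\mu$-measurable density $g$ with $\int_R g \, d\mu = 1$ (so $\nu := g\mu$ is probabilistic) together with an $L^2(R,\nu)$-orthonormal basis $\phi_1, \dots, \phi_n$ of $L(X_n) \subseteq L^p(R,\nu)$ whose Lewis function $W(x) := \bigl(\sum_{j=1}^n \phi_j(x)^2 \bigr)^{1/2}$ satisfies $\|W\|_{L^p(R,\nu)} \leq c(p)\sqrt{n}$. This normalization makes the $L^p(\nu)$- and $L^2(\nu)$-norms equivalent on $L(X_n)$ with constants depending only on $\min\{p,2\}$, which is the source of the dependence on $\min\{p,2\}$ in the conclusion.

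Second, for any scalars $a_1, \dots, a_n$ with $\sum_j a_j^2 = 1$, I apply the pointwise Khintchine inequality to the random function $f_\varepsilon = \sum_j \varepsilon_j a_j \phi_j$ with Rademacher signs $\varepsilon_j$:
\begin{equation*}
\bigl( \mathbb{E}_\varepsilon |f_\varepsilon(x)|^q \bigr)^{1/q} \leq B\sqrt{q}\, \Bigl( \sum_{j=1}^n a_j^2 \phi_j(x)^2 \Bigr)^{1/2} \leq B\sqrt{q}\, W(x).
\end{equation*}
Integrating in $x$ against $d\nu$ and exchanging expectation and integration by Fubini yields $\mathbb{E}_\varepsilon \|f_\varepsilon\|_{L^q(\nu)}^q \leq (B\sqrt{q})^q \|W\|_{L^q(\nu)}^q$, from which a Markov-type argument produces, for a good choice of signs, a single vector $f$ satisfying $\sup_{q \geq 1} \|f\|_{L^q(\nu)}/\sqrt{q} \leq C$.

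The principal obstacle is to upgrade ``most directions are sub-Gaussian'' to ``a subspace of dimension strictly greater than $n/2$ consists of sub-Gaussian directions''. Following the strategy of \cite{BG:89}, one would estimate the Gaussian (or Rademacher) measure of the set of bad directions using the cotype-$2$ constant of $L^{\psi_2}(R,\nu)$ together with the $K$-convexity bound \eqref{prel:pisier_inequality}, and thereby exhibit a subspace $Z \subseteq L(X_n)$ of dimension strictly greater than $n/2$ on which every $L^p(\nu)$-unit vector has $L^{\psi_2}(R,\nu)$-norm at most an absolute constant. Once $Z$ is produced, for any $Y \subseteq X_n$ with $\dim Y \geq n/2$ the isometry $L$ gives $\dim L(Y) + \dim Z > n$, so $L(Y) \cap Z \neq \{0\}$; any unit vector $Lh$ in this intersection yields $h \in Y$ with $\|h\|_{L^p(R,\mu)} = \|Lh\|_{L^p(R,\nu)} = 1$ and, via \eqref{prel:expL2_extrapolation_norm}, the required bound on $\sup_q \|Lh\|_{L^q(R,\nu)}/\sqrt{q}$.
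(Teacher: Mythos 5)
Your Lewis-position step and the Khintchine heuristic are in the right spirit, but the proposal has a genuine gap precisely at the step you flag as ``the principal obstacle,'' and the paper's proof handles that obstacle in a fundamentally different (and more elementary) way.

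The gap: you want to produce a \emph{single} subspace $Z\subseteq L X_n$ with $\dim Z > n/2$ on which every $L^p(\nu)$-unit vector has $L^{\psi_2}(R,\nu)$-norm $O(1)$, so that $Z$ meets every $L(Y)$ by a dimension count. This is a much stronger assertion than the lemma requires, and the Rademacher/Gaussian averaging you describe does not deliver it. Averaging only shows that a \emph{random} (or ``typical'') unit vector in $L X_n$ is sub-Gaussian; the passage from ``a set of directions of large measure'' to ``a subspace of proportional dimension consisting entirely of such directions'' is a Dvoretzky-type statement, and there is no general tool yielding a proportion as large as $1/2$ (let alone strictly more than $1/2$) with absolute constants, particularly not from cotype and $K$-convexity alone. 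Moreover, for $p<2$ the normalization is an issue: in Lewis position you only get $\|u\|_{L^2(\nu)} \leq n^{(2-p)/(2p)}\|u\|_{L^p(\nu)}$, so the sub-Gaussian bound for an $L^2$-unit vector does not translate into an absolute bound once you normalize in $L^p$, and your sketch does not address this.

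What the paper does instead is a covering-number comparison that never needs a universal good subspace. With $\widetilde Y = LY$, it uses two entropy facts: from \cite[Lemma~9.2]{BLM:89}, $\log E(B_2(\widetilde X_n), tB_{\psi_2}(\widetilde X_n)) \leq c\,n/t^2$ for $t\ge 1$; and by volumes, $\log E(B_p(\widetilde Y), \tfrac14 B_p(\widetilde Y)) \geq n\log 2$ since $\dim\widetilde Y\ge n/2$. For $p\ge 2$ one has $B_p(\widetilde Y)\subseteq B_2(\widetilde X_n)$, and choosing $t_0$ large makes the first bound smaller than the second; if $2t_0 B_{\psi_2}(\widetilde Y)$ were contained in $\tfrac14 B_p(\widetilde Y)$ this would force a contradiction between the two entropy estimates, so some $h_0\in\widetilde Y$ has $\|h_0\|_{\psi_2}\lesssim 1$ and $\|h_0\|_{L^p(\nu)}>1/4$; normalizing gives the required $h$ directly inside the given $Y$. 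For $p\in[1,2)$ the containment $B_p\subseteq B_2$ fails, and the bulk of the paper's proof is establishing the bridging estimate $\log E(B_p(\widetilde X_n), tB_2(\widetilde X_n)) \leq c\,\log^2(1+t)\,n/t^2$, which is where the $K$-convexity inequality \eqref{prel:pisier_inequality}, the cotype-$2$ constant, and the $2$-summing norm actually enter. So these tools are used in the paper, but for a quantitatively precise entropy bound in the ``$B_p$ versus $B_2$'' direction, not for a measure estimate of bad directions; your proposal places them in a role where they do not obviously suffice.

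In short: the change of density and the $\psi_2$ reformulation via \eqref{prel:expL2_extrapolation_norm} are fine, but the heart of the lemma is the entropy comparison applied directly to the given $\widetilde Y$, not the construction of a universal sub-Gaussian subspace of dimension exceeding $n/2$. The latter is not established by your sketch and is likely out of reach by these methods; the former avoids it entirely.
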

\begin{proof}
Let $X_n$ be a $n$-dimensional subspace of $L^p(R, \mu)$. Thanks to \cite{L:78} (cf.~\cite[Theorem~2.1]{SZ:01}), there exists a positive $\mu$-measurable function $g$ on $R$ such that $\|g\|_{L^1(R,\mu)} = 1$ and the following is true: Upon setting $\dx\nu = g\d{\mu}$ and defining $Lf = g^{-1/p}f$, $f\in L^p(R,\mu)$, the subspace $\widetilde X_n = LX_n$ of $L^p(R, \nu)$ has a basis $\{\psi_1, \dots, \psi_n\}$ that is orthonormal in $L^2(R,\nu)$ and satisfies
\begin{equation}\label{lem:isometry_entropy_estimates:sum_of_Lewis_basis}
\sum_{j = 1}^n |\psi_j|^2 \equiv n \quad \text{$\mu$-a.e.~on $R$}.
\end{equation}
Note that, since $\widetilde X_n$ has a basis consisting of functions from $L^2(R,\nu)$, we have $\widetilde X_n \subseteq L^2(R,\nu)$ even for $p\in[1,2)$.

Let $Y$ be a subspace of $X_n$ with $\dim Y \geq n/2$. Set 
\begin{align*}
B_p(Z) &= \{f\in Z\colon \|f\|_{L^p(R,\nu)} \leq 1\}
\intertext{and}
B_{exp}(Z) &= \{f\in Z\colon \|f\|_{L^{\psi_2}(R,\nu)} \leq 1\},
\end{align*}
in which $Z$ is $\widetilde X_n$ or $\widetilde Y = LY$. By \cite[Lemma~9.2]{BLM:89}, we have
\begin{equation}\label{lem:isometry_entropy_estimates:entropy_L2-Orlicz}
\log E(B_2(\widetilde X_n), tB_{exp}(\widetilde X_n)) \leq c_1 \frac1{t^2} n \quad \text{for every $t\geq1$};
\end{equation}
here $c_1$ is an absolute constant, which is independent of $n$ and $t$. Since $\dim \widetilde Y \geq n/2$, we have
\begin{equation}\label{lem:isometry_entropy_estimates:volumetric_argument}
\log E(B_p(\widetilde Y), \frac1{4}B_p(\widetilde Y)) \geq n \log 2
\end{equation}
by a standard volumetric argument (e.g., see~\cite[(1.1.10)]{CS:90}).

We start with the case $p\in[2, \infty)$, which is simpler. Since $B_p(\widetilde Y)\subseteq B_2(\widetilde Y) \subseteq B_2(\widetilde X_n)$, it follows from \eqref{lem:isometry_entropy_estimates:entropy_L2-Orlicz} that
\begin{equation}\label{lem:isometry_entropy_estimates:entropy_Lp-Orlicz}
\log E(B_p(\widetilde Y), tB_{exp}(\widetilde X_n)) \leq c_1 \frac1{t^2} n \quad \text{for every $t\geq1$}.
\end{equation}
Moreover, since $E(B_p(\widetilde Y), 2tB_{exp}(\widetilde Y)) \leq E(B_p(\widetilde Y), tB_{exp}(\widetilde X_n))$ (e.g., see~\cite[Fact 4.1.9]{A-AGM:15}), we actually have
\begin{equation*}
\log E(B_p(\widetilde Y), 2tB_{exp}(\widetilde Y)) \leq c_1 \frac1{t^2} n \quad \text{for every $t\geq1$}.
\end{equation*}
Therefore, we can find $t_0\geq1$, not depending on $n$, so large that
\begin{equation*}
\log E(B_p(\widetilde Y), 2t_0B_{exp}(\widetilde Y)) \leq \frac{n\log 2}{2}.
\end{equation*}
It follows that $2t_0B_{exp}(\widetilde Y) \not\subseteq \frac1{4} B_p(\widetilde Y)$. Indeed, if $2t_0B_{exp}(\widetilde Y) \subseteq \frac1{4} B_p(\widetilde Y)$, then
\begin{equation*}
\log E(B_p(\widetilde Y), \frac1{4}B_p(\widetilde Y)) \leq \log E(B_p(\widetilde Y), 2t_0B_{exp}(\widetilde Y)) \leq \frac{n\log 2}{2},
\end{equation*}
which would contradict \eqref{lem:isometry_entropy_estimates:volumetric_argument}. Hence there is a function $h_0\in \widetilde Y$ such that
\begin{equation*}
\|h_0\|_{L^{\psi_2}(R,\nu)} \leq 2t_0 \qquad \text{and} \qquad \|h_0\|_{L^p(R, \nu)} > \frac1{4}.
\end{equation*}
Then $h=L^{-1}h_0/\|h_0\|_{L^p(R, \nu)}$ is the desired function thanks to \eqref{prel:expL2_extrapolation_norm}.

We now turn our attention to the case $p\in[1, 2)$. Assume for the moment that we know that
\begin{equation}\label{lem:isometry_entropy_estimates:entropy_Lp-L2}
\log E(B_p(\widetilde X_n), tB_2(\widetilde X_n)) \leq c_2 \frac{\log^2(1+t)}{t^2} n \quad \text{for every $t\geq1$}.
\end{equation}
Here $c_2$ is a constant depending only on $p$. Clearly
\begin{align*}
E(B_p(\widetilde Y), tB_{exp}(\widetilde X_n)) &\leq E(B_p(\widetilde X_n), tB_{exp}(\widetilde X_n))\\
&\leq E(B_p(\widetilde X_n), sB_2(\widetilde X_n)) \cdot E(sB_2(\widetilde X_n), tB_{exp}(\widetilde X_n)) \\
&= E(B_p(\widetilde X_n), sB_2(\widetilde X_n)) \cdot E(B_2(\widetilde X_n), \frac{t}{s}B_{exp}(\widetilde X_n)),
\end{align*}
and so
\begin{equation*}
\log E(B_p(\widetilde Y), tB_{exp}(\widetilde X_n)) \leq c_3 \left( \frac{\log^2(1+s)}{s^2} + \frac{s^2}{t^2} \right)n
\end{equation*}
for every $1\leq s\leq t$ thanks to \eqref{lem:isometry_entropy_estimates:entropy_L2-Orlicz} and \eqref{lem:isometry_entropy_estimates:entropy_Lp-L2}. Here $c_3$ is a constant depending only on $p$. Plugging $s = \sqrt{t}$ into this inequality, we arrive at
\begin{equation*}
\log E(B_p(\widetilde Y), tB_{exp}(\widetilde X_n)) \leq c_3 \frac{1 + \log^2(1+\sqrt{t})}{t} n \quad \text{for every $t\geq1$}.
\end{equation*}
Since $\lim_{t\to\infty}\frac{1 + \log^2(1+\sqrt{t})}{t} = 0$, we can now proceed in the same way as in the case $p\in[2, \infty)$, using this inequality instead of \eqref{lem:isometry_entropy_estimates:entropy_Lp-Orlicz}. Therefore, the proof will be complete once we prove \eqref{lem:isometry_entropy_estimates:entropy_Lp-L2}\textemdash to that end, we adapt the argument of \cite[Proposition~9.6]{BLM:89}. The proof of \eqref{lem:isometry_entropy_estimates:entropy_Lp-L2} is divided into several steps.

First, observe that
\begin{align}
&E(B_p(\widetilde X_n), tB_2(\widetilde X_n)) \label{lem:isometry_entropy_estimates:entropy_Lp-L2:est1}\\
&\quad\leq E(B_p(\widetilde X_n), 2B_p(\widetilde X_n)\cap 2tB_2(\widetilde X_n)) \cdot E(B_p(\widetilde X_n)\cap tB_2(\widetilde X_n), \frac{t}{2}B_2(\widetilde X_n)). \notag
\end{align}
Since $(\widetilde X_n, \|\cdot\|_{L^2(R,\nu)})$ is a Hilbert space, $2tB_2(\widetilde X_n)$ is a multiple of its unit ball and $B_p(\widetilde X_n)$ is a (nonempty) closed convex subset of $(\widetilde X_n, \|\cdot\|_{L^2(R,\nu)})$, we have $E(B_p(\widetilde X_n), 2tB_2(\widetilde X_n)) = \bar{E}(B_p(\widetilde X_n), 2tB_2(\widetilde X_n))$ (e.g., see~\cite[Fact~4.1.4]{A-AGM:15}). Consequently, if $B_p(\widetilde X_n)\subseteq\bigcup_{k=1}^m (u_k + 2tB_2(\widetilde X_n))$, where $u_k\in B_p(\widetilde X_n)$, then $B_p(\widetilde X_n)\subseteq \bigcup_{k=1}^m (u_k + 2B_p(\widetilde X_n)\cap 2tB_2(\widetilde X_n))$. Hence
\begin{equation}
E(B_p(\widetilde X_n), 2B_p(\widetilde X_n)\cap 2tB_2(\widetilde X_n)) \leq E(B_p(\widetilde X_n), 2tB_2(\widetilde X_n)). \label{lem:isometry_entropy_estimates:entropy_Lp-L2:est2}
\end{equation}
Combining \eqref{lem:isometry_entropy_estimates:entropy_Lp-L2:est1} and \eqref{lem:isometry_entropy_estimates:entropy_Lp-L2:est2}, we obtain
\begin{align}
&\log E(B_p(\widetilde X_n), tB_2(\widetilde X_n)) \notag\\
&\quad\leq \log E(B_p(\widetilde X_n), 2tB_2(\widetilde X_n)) \notag\\
&\quad\quad + \log E(B_p(\widetilde X_n)\cap tB_2(\widetilde X_n), \frac{t}{2}B_2(\widetilde X_n)). \label{lem:isometry_entropy_estimates:iteration_scheme}
\end{align}
Now, thanks to \eqref{lem:isometry_entropy_estimates:sum_of_Lewis_basis} and the fact that $p < 2$, we have
\begin{align*}
\|u\|_{L^2(R, \nu)}^2 &= \int_R \Big|\sum_{j = 1}^n \alpha_j \psi_j \Big|^2 \d{\nu} = \int_R \Big|\sum_{j = 1}^n \alpha_j \psi_j \Big|^{2 - p} \Big|\sum_{j = 1}^n \alpha_j \psi_j \Big|^p \d{\nu}\\
&\leq \Big(\sum_{j = 1}^n |\alpha_j|^2 \Big)^{\frac{2 - p}{2}} \int_R \Big(\sum_{j = 1}^n |\psi_j|^2 \Big)^{\frac{2 - p}{2}} \Big|\sum_{j = 1}^n \alpha_j \psi_j \Big|^p \d{\nu}\\
&= \|u\|_{L^2(R, \nu)}^{2 - p} n^{\frac{2 - p}{2}} \|u\|_{L^p(R, \nu)}^p
\end{align*}
for every $u = \sum_{j = 1}^n \alpha_j \psi_j \in \widetilde X_n$, whence it follows that
\begin{equation*}
\|u\|_{L^2(R, \nu)} \leq n^{\frac{2 - p}{2p}}  \|u\|_{L^p(R, \nu)} \quad \text{for every $u\in \widetilde X_n$}.
\end{equation*}
Clearly, this implies that $B_p(\widetilde X_n) \subseteq rB_2(\widetilde X_n)$ for every $r\geq n^{\frac{2 - p}{2p}}$; hence
\begin{equation*}
\log E(B_p(\widetilde X_n), 2^ktB_2(\widetilde X_n)) = 0
\end{equation*}
for every $k\in\N$ such that $2^kt\geq n^{\frac{2 - p}{2p}}$. Therefore, iterating \eqref{lem:isometry_entropy_estimates:iteration_scheme} with $t$ replaced by $2^kt$, $k\in\N$, we arrive at
\begin{equation}\label{lem:isometry_entropy_estimates:bound_on_entropy_Bp_to_tB2_sum}
\log E(B_p(\widetilde X_n), tB_2(\widetilde X_n)) \leq \sum_{k = 0}^\infty \log E(B_p(\widetilde X_n)\cap 2^ktB_2(\widetilde X_n), 2^{k-1}tB_2(\widetilde X_n)).
\end{equation}

Second, we claim that
\begin{equation}\label{lem:isometry_entropy_estimates:bound_on_entropy_Bp_to_sB2_desired}
\log E(B_p(\widetilde X_n)\cap sB_2(\widetilde X_n), \frac{s}{2}B_2(\widetilde X_n)) \leq c_4 \frac{\log^2(1+s)}{s^2}n
\end{equation}
for every $s\geq1$. Here $c_4$ is a constant depending only on $p$. Fix $s\geq1$. Let $Z$ denote $\widetilde{X}_n$ endowed with the norm
\begin{equation*}
\|u\|_{Z}=\max\left\{\|u\|_{L^p(R,\nu)}, \frac1{s}\|u\|_{L^2(R,\nu)}\right\}.
\end{equation*}
Note that $B_p(\widetilde X_n)\cap sB_2(\widetilde X_n)$ is the unit ball of $Z$. Owing to \cite[(9.1.7) together with Lemma~9.1.3]{A-AGM:15} combined with \cite[Lemma~4.4]{BLM:89}, we have
\begin{align}
&\log E(B_p(\widetilde X_n)\cap sB_2(\widetilde X_n), \frac{s}{2}B_2(\widetilde X_n)) \notag\\
&\quad \leq c_5 K(Z)^2C_2(Z)^2\pi_2^2(Z\hookrightarrow (\widetilde X_n, \|\cdot\|_{L^2(R, \nu)})) \frac1{s^2}. \label{lem:isometry_entropy_estimates:bound_on_entropy_Bp_to_sB2}
\end{align}
Here $c_5$ is an absolute constant.

As for $K(Z)$, we have
\begin{equation}\label{lem:isometry_entropy_estimates:pisier_ineq}
K(Z) \leq c_6 \log(1 + d(Z, \ell_2^n))
\end{equation}
by \eqref{prel:pisier_inequality}. Here $c_6$ is an absolute constant. 

We claim that $d(Z, \ell_2^n) \leq s$. To this end, consider the linear isomorphism $T\colon Z \to \ell_2^n$ defined as
\begin{equation*}
Tf = \{\alpha_j\}_{j = 1}^n,\ f = \sum_{j = 1}^n \alpha_j \psi_j \in Z.
\end{equation*}
Clearly, $T$ is onto $\ell_2^n$, and we have
\begin{equation}\label{lem:isometry_entropy_estimates:eq1}
\|Tf\|_{\ell_2^n} = \|f\|_{L^2(R, \nu)} \leq s \|f\|_{Z}
\end{equation}
for every $f\in Z$. On the other hand, using \eqref{lem:isometry_entropy_estimates:sum_of_Lewis_basis} and the fact that $p < 2$, we obtain
\begin{align*}
\|T^{-1}(\{\alpha_j\}_{j = 1}^n)\|_{L^p(R, \nu)}^p &= \int_R \left(\sum_{j = 1}^n \alpha_j \psi_j(t)\right)^p\d{\nu(t)} \\
&= \int_R \left(\sum_{j = 1}^n \alpha_j \psi_j(t)\right)^{p-2} \left(\sum_{j = 1}^n \alpha_j \psi_j(t)\right)^2\d{\nu(t)} \\
&\leq \int_R \left(\|\{\alpha_j\}_{j = 1}^n\|_{\ell_2^n}\sqrt{n}\right)^{p-2} \left(\sum_{j = 1}^n \alpha_j \psi_j(t)\right)^2\d{\nu(t)} \\
&\leq \|\{\alpha_j\}_{j = 1}^n\|_{\ell_2^n}^{p-2} \|\sum_{j = 1}^n \alpha_j \psi_j\|_{L^2(R,\nu)}^2 \\
& = \|\{\alpha_j\}_{j = 1}^n\|_{\ell_2^n}^p
\end{align*}
for every $\{\alpha_j\}_{j = 1}^n$. Furthermore, we plainly have
\begin{equation*}
\frac1{s}\|T^{-1}(\{\alpha_j\}_{j = 1}^n)\|_{L^2(R, \nu)} \leq \|T^{-1}(\{\alpha_j\}_{j = 1}^n)\|_{L^2(R, \nu)} = \|\{\alpha_j\}_{j = 1}^n\|_{\ell_2^n}.
\end{equation*}
Therefore $\|T^{-1}\| \leq 1$. By combining this with \eqref{lem:isometry_entropy_estimates:eq1}, it follows that
\begin{equation*}
d(Z, \ell_2^n) \leq s.
\end{equation*}
Plugging this into \eqref{lem:isometry_entropy_estimates:pisier_ineq}, we obtain
\begin{equation}\label{lem:isometry_entropy_estimates:convexity_constant_of_Z}
K(Z) \leq c_6 \log(1 + s).
\end{equation}

As for $C_2(Z)$, we claim that
\begin{equation}\label{lem:isometry_entropy_estimates:bound_on_2-cotype_constant}
C_2(Z) \leq c_7,
\end{equation}
where $c_7$ is a constant depending only on $p$. To this end, recall that
\begin{equation*}
\max\{C_2(L^p(R,\nu)), C_2(L^2(R,\nu)) \} < \infty,
\end{equation*}
and $C_2(L^p(R,\nu))$ depends only $p$ (e.g.,~see \cite[Theorem~6.2.14]{AK:16}). Now, since
\begin{align*}
\left(\sum_{j = 1}^m \|f_j\|_{Z}^2 \right)^{1/2} &\leq \left(\sum_{j = 1}^m \|f_j\|_{L^p(R,\nu)}^2 \right)^{1/2} + \frac1{s}\left(\sum_{j = 1}^m \|f_j\|_{L^2(R,\nu)}^2 \right)^{1/2} \\
&\leq 2 \max\big\{ C_2(L^p(R,\nu)), C_2(L^2(R,\nu)) \big\} \int_0^1 \Big\| \sum_{j = 1}^m f_j r_j(t) \Big\|_{Z} \d{t}
\end{align*}
for every $\{f_j\}_{j = 1}^m\subseteq Z$, $m\in\N$, the claim immediately follows.

As for $\pi_2(Z\hookrightarrow (\widetilde X_n, \|\cdot\|_{L^2(R, \nu)}))$, note that $\|f\|_{L^1(R,\nu)} \leq \|f\|_{L^p(R,\nu)} \leq \|f\|_{Z}$ for every $f\in Z$, and so the unit ball of $(\widetilde X_n, \|\cdot\|_{L^1(R, \nu)})^*$ is contained in the unit ball of $Z^*$. It follows that
\begin{equation*}
\pi_2(Z\hookrightarrow (\widetilde X_n, \|\cdot\|_{L^2(R, \nu)})) \leq \pi_2((\widetilde X_n, \|\cdot\|_{L^1(R, \nu)})\hookrightarrow (\widetilde X_n, \|\cdot\|_{L^2(R, \nu)})).
\end{equation*}
By \cite[Proof of Lemma~4.5]{BLM:89}, we have
\begin{equation*}
\pi_2((\widetilde X_n, \|\cdot\|_{L^1(R, \nu)})\hookrightarrow (\widetilde X_n, \|\cdot\|_{L^2(R, \nu)})) \leq c_8 \sqrt{n}.
\end{equation*}
Here $c_8$ is an absolute constant. Hence
\begin{equation}\label{lem:isometry_entropy_estimates:bound_on_2-summing_constant}
\pi_2(Z\hookrightarrow (\widetilde X_n, \|\cdot\|_{L^2(R, \nu)})) \leq c_8 \sqrt{n}.
\end{equation}

The desired estimate \eqref{lem:isometry_entropy_estimates:bound_on_entropy_Bp_to_sB2_desired} now follows by combining \eqref{lem:isometry_entropy_estimates:convexity_constant_of_Z}, \eqref{lem:isometry_entropy_estimates:bound_on_2-cotype_constant} and \eqref{lem:isometry_entropy_estimates:bound_on_2-summing_constant} with \eqref{lem:isometry_entropy_estimates:bound_on_entropy_Bp_to_sB2}.

Finally, now that we have \eqref{lem:isometry_entropy_estimates:bound_on_entropy_Bp_to_sB2_desired} at our disposal, the rest is simple. Combining \eqref{lem:isometry_entropy_estimates:bound_on_entropy_Bp_to_sB2_desired} with \eqref{lem:isometry_entropy_estimates:bound_on_entropy_Bp_to_tB2_sum}, we obtain
\begin{align*}
\log E(B_p(\widetilde X_n), tB_2(\widetilde X_n)) &\leq c_4 \sum_{k = 0}^\infty \frac{\log^2(1+2^kt)}{4^kt^2}n \\
&\leq 2c_4\left(\sum_{k = 0}^\infty \frac{k^2\log^22 + \log^2(1+t)}{4^k}\right) \frac1{t^2} n \\
&\leq 2c_4\left(\sum_{k = 0}^\infty \frac{k^2 + 1}{4^k}\right) \frac{\log^2(1+t)}{t^2} n.
\end{align*}
This finishes the proof of \eqref{lem:isometry_entropy_estimates:entropy_Lp-L2}.
\end{proof}

We are now in a position to prove the main result concerning the embedding \eqref{prel:optimal_Leb}.
\begin{theorem}\label{thm:optimal_Sobolev_Lebesgue}
Let $\Omega\subseteq\rd$ be a nonempty bounded open set, $m\in\N$, $1\leq m < d$, and $p\in [1, d/m)$. Denote by $I$ the identity operator $I\colon V_0^{m,p}(\Omega) \to L^{p^*}(\Omega)$, where $p^* = dp/(d-mp)$. There exists $n_0\in\N$, depending only on $d$ and $m$, such that
\begin{equation}\label{thm:optimal_Sobolev_Lebesgue:_estimate_on_bn}
C_1 n^{-\frac{m}{d}} \leq b_n(I) \leq C_2 n^{-\frac{m}{d}} \qquad \text{for every $n\geq n_0$}.
\end{equation}
Here $C_1$ and $C_2$ are constants depending only on $d$, $m$ and $p$.

In particular, $I$ is finitely strictly singular.
\end{theorem}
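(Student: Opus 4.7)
For the lower bound, I would invoke Proposition~\ref{prop:scaling_invariance} with $q = p^*$, a fixed $\lambda \in (0, \|I\|)$ (say $\lambda = \|I\|/2$), and $\varepsilon = 1$. This yields a system $\{u_j\}_{j=1}^\infty$ of pairwise disjointly supported, unit $V_0^{m,p}$-normalized functions with $\|u_j\|_{L^{p^*}(\Omega)} = \lambda$. The linear span $E_n$ of $u_1,\dots,u_n$ is a legitimate $n$-dimensional competitor in the definition of $b_n(I)$. Because the supports of the $\nabla^m u_j$ are pairwise disjoint, the identity $\bigl\|\sum_{j=1}^{n} \alpha_j u_j\bigr\|_{V_0^{m,p}(\Omega)}^{p} = \sum_{j=1}^{n} |\alpha_j|^{p}$ holds exactly on $E_n$, whereas Proposition~\ref{prop:scaling_invariance}(iv) gives the lower bound $\bigl\|\sum_{j=1}^n \alpha_j u_j\bigr\|_{L^{p^*}(\Omega)} \ge 2^{-1/p^*}\lambda \bigl(\sum_{j=1}^n |\alpha_j|^{p^*}\bigr)^{1/p^*}$. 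Since $p < p^*$, a Lagrange-multiplier computation shows that the quotient $(\sum |\alpha_j|^{p^*})^{1/p^*}/(\sum |\alpha_j|^p)^{1/p}$ over $\R^n\setminus\{0\}$ is minimized when $|\alpha_j| \equiv n^{-1/p}$, the minimum being $n^{1/p^* - 1/p} = n^{-m/d}$. This yields $b_n(I) \ge C_1 n^{-m/d}$ for every $n \in \N$.

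For the upper bound, Lemma~\ref{lem:isometry_entropy_estimates} is the decisive ingredient. Given an $n$-dimensional subspace $X_n \subseteq V_0^{m,p}(\Omega)$, the gradient map $u \mapsto \nabla^m u$ identifies it, up to a harmless normalization of the reference measure, with an $n$-dimensional subspace $W_n$ of the scalar-valued $L^p(R,\mu)$, where $R$ is the disjoint union of finitely many copies of $\Omega$ (one per multi-index of order $m$) and $\mu$ is a normalized Lebesgue measure on $R$. Applying Lemma~\ref{lem:isometry_entropy_estimates} to $W_n$ produces a positive density $g$ on $R$ with $\int_R g\,d\mu = 1$ and an isometry $L$ such that every subspace of $W_n$ of dimension at least $n/2$ contains a function $\nabla^m u$ satisfying $\|u\|_{V_0^{m,p}(\Omega)}=1$ and $\|g^{-1/p}\nabla^m u\|_{L^q(R,\nu)}\le C\sqrt{q}$ for every $q \ge 1$, where $\nu = g\,d\mu$ and $C$ is an absolute constant; equivalently, $g^{-1/p}\nabla^m u$ lies in $L^{\psi_2}(R,\nu)$ with a uniform bound.

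The main obstacle is to convert this sub-Gaussian moment estimate on $\nabla^m u$ into the target bound $\|u\|_{L^{p^*}(\Omega)} \le C n^{-m/d}$. My plan is to invoke the Riesz-potential representation $|u(x)|\lesssim\int_{\Omega}|x-y|^{m-d}|\nabla^m u(y)|\,dy$ in combination with a weighted Hardy--Littlewood--Sobolev estimate, splitting the convolution according to the size of $g$: on the region where $g$ is comparable to a fixed level the sub-Gaussian $L^q$-moment estimate optimized at $q \sim \log n$ yields the required decay, whereas the complementary region, which by Chebyshev and $\int g\,d\mu = 1$ has controllably small measure, is handled by a crude $L^p$-bound on $\nabla^m u$. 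The arithmetic that produces the exponent $n^{-m/d}$ is driven by the critical Sobolev scaling $\tfrac{1}{p}-\tfrac{1}{p^*}=\tfrac{m}{d}$, which sets the trade-off between the two regions. Since the estimate is obtained on every subspace of dimension at least $n/2$, the definition of the Bernstein numbers delivers $b_n(I)\le C_2 n^{-m/d}$ for all $n \ge n_0$; combined with the lower bound this proves \eqref{thm:optimal_Sobolev_Lebesgue:_estimate_on_bn}, and $b_n(I)\to 0$ then forces $I$ to be finitely strictly singular.
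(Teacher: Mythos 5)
Your lower-bound argument is correct and coincides in spirit with the paper's: the span of $u_1,\dots,u_n$ from Proposition~\ref{prop:scaling_invariance} (with $q=p^*$) is isometric to $\ell_p^n$ in the source norm, part~(iv) bounds the target norm from below by a constant times the $\ell_{p^*}^n$-norm, and the minimum of the ratio of $\ell_{p^*}^n$- to $\ell_p^n$-norms over nonzero vectors is $n^{1/p^*-1/p}=n^{-m/d}$. The paper simply quotes $b_n(\ell_p\to\ell_{p^*})=n^{-m/d}$ rather than redoing that minimization, but the two routes are equivalent.

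The upper bound, however, has a genuine gap. You rightly single out Lemma~\ref{lem:isometry_entropy_estimates} and the resulting sub-Gaussian bound $\|g^{-1/p}\nabla^m u\|_{L^q(R,\nu)}\leq C\sqrt{q}$ as the key input, but your plan for converting it into $\|u\|_{L^{p^*}(\Omega)}\lesssim n^{-m/d}$ does not close, for two concrete reasons. First, the sub-Gaussian estimate alone carries no $n$-dependence: take $g\equiv1$, so $\nu=\mu$ and the estimate merely says $\nabla^m u\in L^{\psi_2}(\mu)$ with an absolute constant, whence a global Riesz-potential/HLS bound gives only $\|u\|_{L^{p^*}}=O(1)$; "optimizing at $q\sim\log n$'' produces a factor $C\sqrt{\log n}$, which grows, and cannot manufacture a power $n^{-m/d}$. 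Second, on the complement $\{g\geq t\}$ your "crude $L^p$-bound on $\nabla^m u$'' is simply $\leq 1$ by normalization; the smallness of $\mu(\{g\geq t\})$ does not shrink it, and $\nu(\{g\geq t\})$ need not be small at all. What actually supplies the decay in the paper is entirely different: a Besicovitch cover of $\Omega$ by $M\lesssim n/c_2$ balls $B_k$ chosen so that each carries $\nu$-measure $2c_1c_2/n$; a passage (via rank--nullity applied to the $M$ polynomial-correction maps $u\mapsto P_{u,k}$) to a subspace $Y\subseteq X_n$ with $\dim Y\geq n/2$ on which all Sobolev--Poincar\'e polynomials vanish\textemdash this is precisely what the ``$\dim Y\geq n/2$'' flexibility in Lemma~\ref{lem:isometry_entropy_estimates} is for, which your plan never uses since the Riesz representation bypasses the polynomials; and then H\"older's inequality in the $\nu$-measure on each $B_k$, which yields the factor $\|\chi_{B_k}\|_{L^{d/m}(R,\nu)}=(\nu(B_k))^{m/d}\sim n^{-m/d}$, while the sub-Gaussian bound is evaluated at the \emph{fixed} exponent $q=p^*$ and contributes only a constant. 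Without a $\nu$-adapted covering and the accompanying codimension reduction, the sub-Gaussian estimate cannot produce the required power decay; localizing your Riesz-potential argument to such a cover would essentially reproduce the paper's proof.
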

\begin{proof}
First, we prove the upper bound on $b_n(I)$. Set $l = d^m$. We may without loss of generality assume that $|\Omega|=1/l$; otherwise we replace $\dx x$ with $\dx x/(l|\Omega|)$. We start with a few definitions. By $G\colon V^{m, p}_0(\Omega) \to \bigoplus_{j = 1}^l L^p(\Omega)$ we denote the linear isometric operator defined as
\begin{equation*}
Gu = \nabla^m u,\ u\in V^{m, p}_0(\Omega).
\end{equation*}
Here $\bigoplus_{j = 1}^l$ stands for the $\ell_p$-direct sum, and the way in which the vector $\nabla^m u$ is ordered is completely immaterial\textemdash we fix arbitrary order. Furthermore, let $R = \bigoplus_{j = 1}^l \Omega^{(j)}$ consist of $l$ disjoint copies of $\Omega$, each endowed with the Lebesgue measure. We denote the corresponding probabilistic measure space by $(R, \mu)$. Finally, $S\colon \bigoplus_{j = 1}^l L^p(\Omega) \to L^p(R, \mu)$ denotes the linear isometry defined as
\begin{equation*}
S(f_1, \dots, f_l) = \sum_{j = 1}^l f_j\chi_{\Omega^{(j)}},\quad (f_1, \dots, f_l)\in \bigoplus_{j = 1}^l L^p(\Omega).
\end{equation*}

Let $c_1$ be the Besicovitch constant in $\rd$. Recall that $c_1$ depends only on $d$. Set $c_2 = \binom{d+m-1}{m-1}$. Note that $c_2$ is the dimension of the vector space of polynomials in $\rd$ of degree at most $m-1$, which we will denote by $\Pol_{m-1}(\rd)$. Assume that $n\geq 2 c_1 c_2$. Let $X_n$ be a $n$-dimensional subspace of $V_0^{m,p}(\Omega)$, and $\widetilde{X}_n\subseteq L^p(R, \mu)$ its image under the linear isometric operator $S\circ G$. Clearly, $\dim \widetilde{X}_n = n$. Let $L$, $g$ and $\nu$ be those from \myref{Lemma}{lem:isometry_entropy_estimates} applied to $\widetilde{X}_n$. Since $\Omega$ is bounded and $\|g\|_{L^1(R, \mu)} = 1$, for each $x\in \Omega$ we can find $r_x\in(0, \diam\Omega]$ such that
\begin{equation}\label{thm:optimal_Sobolev_Lebesgue:volume_of_balls}
\int_{\bigoplus_{j = 1}^l B^{(j)}_{r_x}(x)} g\d{\mu} = \frac{2c_1c_2}{n}.
\end{equation}
Here $B^{(j)}_{r_x}(x)$ are disjoint copies of $B_{r_x}(x)$ in $\Omega^{(j)}$. Using the Besicovitch covering lemma, we find a countable subcollection $\{B_{r_k}(x_k)\}_{k=1}^M$ such that
\begin{align}
\Omega \subseteq \bigcup_{k = 1}^M \bar{B}_{r_k}(x_k) \label{thm:optimal_Sobolev_Lebesgue:Besicovitch_cover}
\intertext{and}
\sum_{k = 1}^M\chi_{B_{r_k}(x_k)} \leq c_1. \label{thm:optimal_Sobolev_Lebesgue:Besicovitch_multiplicity}
\end{align}
We claim that
\begin{equation}\label{thm:optimal_Sobolev_Lebesgue:number_of_balls_upper}
M \leq \frac{n}{2c_2}.
\end{equation}
Indeed, we have
\begin{align*}
M \frac{2c_1c_2}{n} &= \sum_{k = 1}^M \int_{\bigoplus_{j = 1}^l B^{(j)}_{r_k}(x_k)} g\d{\mu}\\
&\leq \Big\| \sum_{k = 1}^M \chi_{\bigoplus_{j = 1}^l B^{(j)}_{r_k}(x_k)} \Big\|_{L^\infty(R,\mu)} \|g\|_{L^1(R,\mu)}\\
&\leq  c_1.
\end{align*}
Recall that, for every $u\in V^{m,p}(B_{r_k}(x_k))$, $k=1,\dots, M$, there is a polynomial $P_{u,k}\in\Pol_{m-1}(\rd)$, depending on $u$ and $B_{r_k}(x_k)$, such that
\begin{equation}\label{thm:optimal_Sobolev_Lebesgue:Poincare_Sobolev_balls}
\|u - P_{u,k}\|_{L^{p^*}(B_{r_k}(x_k))} \leq c_3 \|\nabla^m u\|_{L^p(B_{r_k}(x_k))};
\end{equation}
moreover, the dependence of $P_{u,k}$ on $u$ is linear. Here $c_3$ depends only on $d$, $m$ and $p$. This follows easily by iterating the classical Sobolev--Poincar\'e inequality on balls (e.g., see \cite[Corollary~1.64]{MZ:97}).

We claim that there is a subspace $Y$ of $X_n$ with $\dim Y\geq n/2$ such that
\begin{equation}\label{thm:optimal_Sobolev_Lebesgue:Poincare_Sobolev_on_subspace_codim_n/2}
\|u\|_{L^{p^*}(B_{r_k}(x_k))} \leq c_3 \|\nabla^m u\|_{L^p(B_{r_k}(x_k))} \quad \text{for every $u\in Y$ and $k$}.
\end{equation}
Indeed, set $Y_0 = X_n$, and let $Y_1$ be the kernel of the linear operator $Y_0 \ni u \mapsto P_{u, 1}\in\Pol_{m-1}(\rd)$. By the rank-nullity theorem, we have
\begin{equation*}
\dim Y_1 \geq n-\dim\Pol_{m-1}(\rd) = n - c_2.
\end{equation*}
Now, let $Y_2$ be the kernel of the linear operator $Y_1 \ni u \mapsto P_{u, 2}$. It follows that $\dim Y_2 \geq n-2c_2$. Proceeding in the obvious way, we find a subspace $Y=Y_M$ of $X_n$ with $\dim Y \geq n - Mc_2$ such that $P_{u, k}\equiv0$ for every $u \in Y$ and $k=1,\dots, M$. The claim now immediately follows from \eqref{thm:optimal_Sobolev_Lebesgue:number_of_balls_upper} and \eqref{thm:optimal_Sobolev_Lebesgue:Poincare_Sobolev_balls}.

Let $\widetilde{Y}$ be the image of $Y$ under the linear isometric operator $S\circ G$. Thanks to \myref{Lemma}{lem:isometry_entropy_estimates}, there is $\tilde{u}\in \widetilde{Y}\subseteq \widetilde{X}_n$ such that
\begin{align}
\|u\|_{V_0^{m, p}(\Omega)} = \|\tilde{u}\|_{L^p(R, \mu)} &= 1 \label{thm:optimal_Sobolev_Lebesgue:on_sphere}\\
\intertext{and}
\sup_{q\in[1, \infty)}\frac{\|L\tilde{u}\|_{L^q(R,\nu)}}{\sqrt{q}} &\leq c_4 \label{thm:optimal_Sobolev_Lebesgue:bounded_in_expL2},
\end{align}
where $u = (SG)^{-1}\widetilde{u}\in V_0^{m, p}(\Omega)$ and $\d{\nu} = g\d{\mu}$. Here $c_4$ is a constant depending only on $p$. By \eqref{thm:optimal_Sobolev_Lebesgue:Besicovitch_cover} and \eqref{thm:optimal_Sobolev_Lebesgue:Poincare_Sobolev_on_subspace_codim_n/2}, we have
\begin{align}
\|u\|_{L^{p^*}(\Omega)}^{p^*} &\leq \sum_{k=1}^M \|u\|_{L^{p^*}(B_{r_k}(x_k))}^{p^*} \leq c_3^{p^*} \sum_{k=1}^M \|\nabla^m u\|_{L^p(B_{r_k}(x_k))}^{p^*} \notag\\
&= c_3^{p^*} \sum_{k=1}^M \|\tilde{u}\chi_{\bigoplus_{j = 1}^l B^{(j)}_{r_k}(x_k)} \|_{L^p(R, \mu)}^{p^*} \notag\\
&= c_3^{p^*} \sum_{k=1}^M \|(L\tilde{u})\chi_{\bigoplus_{j = 1}^l B^{(j)}_{r_k}(x_k)} g^{1/p}\|_{L^p(R, \mu)}^{p^*} \notag\\
&= c_3^{p^*} \sum_{k=1}^M \|(L\tilde{u})\chi_{\bigoplus_{j = 1}^l B^{(j)}_{r_k}(x_k)}\|_{L^p(R, \nu)}^{p^*}. \label{thm:optimal_Sobolev_Lebesgue:final_chain_1}
\end{align}
Furthermore, by the H\"older inequality combined with the identity $1/p^* = 1/p - m/d$, \eqref{thm:optimal_Sobolev_Lebesgue:volume_of_balls}, \eqref{thm:optimal_Sobolev_Lebesgue:Besicovitch_cover} combined with \eqref{thm:optimal_Sobolev_Lebesgue:Besicovitch_multiplicity}, and \eqref{thm:optimal_Sobolev_Lebesgue:bounded_in_expL2}, we have
\begin{align}
&\sum_{k=1}^M \|(L\tilde{u})\chi_{\bigoplus_{j = 1}^l B^{(j)}_{r_k}(x_k)}\|_{L^p(R, \nu)}^{p^*} \notag\\
&\quad\leq \sum_{k=1}^M \|(L\tilde{u})\chi_{\bigoplus_{j = 1}^l B^{(j)}_{r_k}(x_k)}\|_{L^{p^*}(R, \nu)}^{p^*} \|\chi_{\bigoplus_{j = 1}^l B^{(j)}_{r_k}(x_k)}\|_{L^\frac{d}{m}(R, \nu)}^{p^*} \notag\\
&\quad= \sum_{k=1}^M \|(L\tilde{u})\chi_{\bigoplus_{j = 1}^l B^{(j)}_{r_k}(x_k)}\|_{L^{p^*}(R, \nu)}^{p^*} \|\chi_{\bigoplus_{j = 1}^l B^{(j)}_{r_k}(x_k)}\|_{L^1(R, \nu)}^{\frac{mp}{d-mp}} \notag\\
&\quad= \left(\frac{2 c_1 c_2}{n}\right)^{\frac{mp}{d-mp}} \sum_{k=1}^M \|(L\tilde{u})\chi_{\bigoplus_{j = 1}^l B^{(j)}_{r_k}(x_k)}\|_{L^{p^*}(R, \nu)}^{p^*} \notag\\
&\quad\leq c_1 \left(\frac{2 c_1 c_2}{n}\right)^{\frac{mp}{d-mp}}\|L\tilde{u}\|_{L^{p^*}(R, \nu)}^{p^*} \notag\\
&\quad\leq c_1 \left(\frac{2 c_1 c_2}{n}\right)^{\frac{mp}{d-mp}} c_4^{p^*}(p^*)^{p^*/2}. \label{thm:optimal_Sobolev_Lebesgue:final_chain_2}
\end{align}
Combining \eqref{thm:optimal_Sobolev_Lebesgue:final_chain_1} and \eqref{thm:optimal_Sobolev_Lebesgue:final_chain_2}, we obtain
\begin{equation}\label{thm:optimal_Sobolev_Lebesgue:final_ineq}
\|u\|_{L^{p^*}(\Omega)}^{p^*} \leq C_2^{p^*} n^{-{\frac{mp}{d-mp}}}.
\end{equation}
Here $C_2^{p^*} = c_1 (2 c_1 c_2)^{\frac{mp}{d-mp}} (c_3 c_4 \sqrt{p^*})^{p^*}$ depends only on $d$, $m$ and $p$. The desired upper bound in \eqref{thm:optimal_Sobolev_Lebesgue:_estimate_on_bn} now follows immediately from \eqref{thm:optimal_Sobolev_Lebesgue:on_sphere} and \eqref{thm:optimal_Sobolev_Lebesgue:final_ineq}.

Finally, we turn our attention to the lower bound in \eqref{thm:optimal_Sobolev_Lebesgue:_estimate_on_bn}, whose proof is simpler. To that end, recall that we have (e.g., see \cite[Remark~7]{P:04})
\begin{equation}\label{thm:optimal_Sobolev_Lebesgue:berns_lp->lp*}
b_n(\ell_p \to \ell_{p^*}) = n^{\frac{p - p^*}{pp^*}} = n^{-\frac{m}{d}} \quad \text{for every $n\in\N$}.
\end{equation}
Here we used the fact that $1\leq p < p^*$. Let $0 < \lambda < \|I\|$ and $\varepsilon > 0$. By \eqref{thm:optimal_Sobolev_Lebesgue:berns_lp->lp*}, there is a subspace $E_n$ of $\ell_p$ with $\dim E_n = n$ such that
\begin{equation}\label{thm:optimal_Sobolev_Lebesgue:berns_lp->lp*_almost_attained}
\inf_{\substack{\{\alpha_j\}_{j  = 1}^\infty \in E_n\\ \|\{\alpha_j\}_{j  = 1}^\infty\|_{\ell_p} = 1}} \|\{\alpha_j\}_{j  = 1}^\infty\|_{\ell_{p^*}} \geq n^{-\frac{m}{d}} - \varepsilon.
\end{equation}
Furthermore, let $\{u_j\}_{j = 1}^\infty$ and $\{B_j\}_{j = 1}^\infty$ be systems whose existence is guaranteed by \myref{Proposition}{prop:scaling_invariance} with $q = p^*$. Note that the linear operator $T\colon \ell_p \to V_0^{m, p}(\Omega)$ defined as
\begin{equation*}
T(\{\alpha_j\}_{j  = 1}^\infty) = \sum_{j = 1}^\infty \alpha_j u_j,\ \{\alpha_j\}_{j  = 1}^\infty \in \ell_p,
\end{equation*}
is well defined and isometric. Indeed, since the functions $u_j$ have mutually disjoint supports and $\|u_j\|_{V_0^{m, p}(\Omega)} = 1$, we have
\begin{equation*}
\Big\|\sum_{j = 1}^\infty \alpha_j u_j\Big\|_{V_0^{m, p}(\Omega)}^p = \Big\| \sum_{j = 1}^\infty \alpha_j \nabla u_j \Big\|_{L^p(\Omega)}^p =  \sum_{j = 1}^\infty |\alpha_j|^p \|\nabla u_j\|_{L^p(\Omega)}^p = \sum_{j = 1}^\infty |\alpha_j|^p.
\end{equation*}
In particular, $T$ is injective. Furthermore, we also have
\begin{equation}\label{thm:optimal_Sobolev_Lebesgue:Lp*_norm_=_lambda_ell_p*_norm}
\Big\| \sum_{j = 1}^\infty \alpha_j u_j \Big\|_{L^{p^*}(\Omega)} = \lambda \|\{\alpha_j\}_{j  = 1}^\infty\|_{\ell_{p^*}} \quad \text{for every $\{\alpha_j\}_{j  = 1}^\infty\in \ell_{p^*}$}
\end{equation}
since $\| u_j \|_{L^{p^*}(\Omega)} = \lambda$ for every $j\in\N$. Set $X_n = TE_n$. We have $\dim X_n = \dim E_n = n$. Combining \eqref{thm:optimal_Sobolev_Lebesgue:berns_lp->lp*_almost_attained} and \eqref{thm:optimal_Sobolev_Lebesgue:Lp*_norm_=_lambda_ell_p*_norm} with the fact that $T$ is isometric, we arrive at
\begin{align*}
b_n(I) &\geq \inf_{\substack{u\in X_n\\ \|u\|_{V_0^{m, p}(\Omega)} = 1}} \|u\|_{L^{p^*}(\Omega)} = \inf_{\substack{\{\alpha_j\}_{j  = 1}^\infty \in E_n\\ \|\{\alpha_j\}_{j  = 1}^\infty\|_{\ell_p} = 1}}  \Big\| \sum_{j = 1}^\infty \alpha_j u_j \Big\|_{L^{p^*}(\Omega)} \\
&= \lambda \inf_{\substack{\{\alpha_j\}_{j  = 1}^\infty \in E_n\\ \|\{\alpha_j\}_{j  = 1}^\infty\|_{\ell_p} = 1}}  \|\{\alpha_j\}_{j  = 1}^\infty\|_{\ell_{p^*}} \geq \lambda(n^{-\frac{m}{d}} - \varepsilon).
\end{align*}
Letting $\varepsilon\to 0^+$ and $\lambda \to \|I\|^-$, we obtain
\begin{equation*}
b_n(I) \geq \|I\|n^{-\frac{m}{d}}.
\end{equation*}
Note that this is actually the desired lower bound in \eqref{thm:optimal_Sobolev_Lebesgue:_estimate_on_bn} because we can take $C_1 = \|I\|$. Indeed, the norm of the embedding $V_0^{m,p}(\Omega) \to L^{p^*}(\Omega)$ depends only on $d$, $m$ and $p$ but not on $\Omega$. This follows from the simple observation that
\begin{align*}
\|I\colon V_0^{m,p}(B) \to L^{p^*}(B)\| &\leq \|I\colon V_0^{m,p}(\Omega) \to L^{p^*}(\Omega)\| \\
&\leq \|I\colon V_0^{m,p}(\tilde{B}) \to L^{p^*}(\tilde{B})\|,
\end{align*}
where $B$ and $\tilde{B}$ are (any) open balls in $\rd$ such that $B \subseteq \Omega \subseteq \tilde{B}$, and from the fact that $\|I\colon V_0^{m,p}(B) \to L^{p^*}(B)\|$ is constant for every open ball $B\subseteq\rd$ and depends only on $d, m$ and $p$\textemdash to that end, recall \eqref{prop:scaling_invariance:shrinking_property}.
\end{proof}

We conclude with the Lorentz case. The following theorem tells us that the ``really optimal'' Sobolev embedding \eqref{prel:optimal_Lor} is not strictly singular (let alone finitely strictly singular); moreover, all its Bernstein numbers coincide with its norm.

\begin{theorem}\label{thm:optimal_Sobolev_Lorentz}
Let $\Omega\subseteq\rd$ be a nonempty bounded open set, $m\in\N$, $1\leq m < d$, and $p\in[1, d/m)$. Denote by $I$ the identity operator $I\colon V_0^{m,p}(\Omega) \to L^{p^*, p}(\Omega)$, where $p^* = dp/(d-mp)$. We have
\begin{equation}\label{thm:optimal_Sobolev_Lorentz_NOT_SS:Bernstein_eq}
b_n(I) = \|I\| \qquad \text{for every $n\in\N$},
\end{equation}
where $\|I\|$ denotes the operator norm.

Furthermore, $I$ is not strictly singular.
\end{theorem}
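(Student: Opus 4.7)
The plan is to exploit \myref{Proposition}{prop:scaling_invariance} with $q=p$, where the Lorentz exponent matches the Sobolev integrability. The crucial observation is that, for this choice, the lower bound in \eqref{prop:scaling_invariance:extremal_system} matches exactly with how disjointly supported functions combine in the $V_0^{m,p}$-norm. Indeed, if $\{u_j\}$ are the functions produced by the proposition (each satisfying $\|u_j\|_{V_0^{m,p}(\Omega)}=1$, $\|u_j\|_{L^{p^*,p}(\Omega)}=\lambda$, and having pairwise disjoint supports contained in disjoint balls), then for any $\{\alpha_j\}\in\ell_p$ the disjointness of supports forces
\begin{equation*}
\Big\|\sum_{j=1}^\infty \alpha_j u_j\Big\|_{V_0^{m,p}(\Omega)}^p = \sum_{j=1}^\infty |\alpha_j|^p \|u_j\|_{V_0^{m,p}(\Omega)}^p = \sum_{j=1}^\infty |\alpha_j|^p,
\end{equation*}
while \eqref{prop:scaling_invariance:extremal_system} with $q=p$ gives $\big\|\sum_j\alpha_j u_j\big\|_{L^{p^*,p}(\Omega)}\geq \lambda(1+\varepsilon)^{-1/p}\big(\sum_j|\alpha_j|^p\big)^{1/p}$.

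For the first statement, fix $n\in\N$, $0<\lambda<\|I\|$, and $\varepsilon>0$, and let $X_n=\operatorname{span}\{u_1,\dots,u_n\}$, which is $n$-dimensional because the $u_j$ have disjoint supports and are nonzero. For any $u=\sum_{j=1}^n\alpha_j u_j\in X_n$ with $\|u\|_{V_0^{m,p}(\Omega)}=1$, the two identities above yield $\|Iu\|_{L^{p^*,p}(\Omega)}\geq \lambda(1+\varepsilon)^{-1/p}$. Thus $b_n(I)\geq \lambda(1+\varepsilon)^{-1/p}$; letting $\varepsilon\to 0^+$ and $\lambda\to\|I\|^-$ gives $b_n(I)\geq \|I\|$. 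The reverse inequality $b_n(I)\leq s_1(I)=\|I\|$ holds by property (S1), proving \eqref{thm:optimal_Sobolev_Lorentz_NOT_SS:Bernstein_eq}.

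For the second statement, consider the closed subspace $Z=\overline{\operatorname{span}}\{u_j:j\in\N\}$ of $V_0^{m,p}(\Omega)$. From the two displayed identities, the linear map $T\colon\ell_p\to V_0^{m,p}(\Omega)$ defined by $T(\{\alpha_j\})=\sum_j\alpha_j u_j$ is an isometric embedding with dense range in $Z$, so $Z$ is isometrically isomorphic to $\ell_p$ and in particular infinite-dimensional. Moreover, on the dense subspace of finite combinations we have the uniform lower bound
\begin{equation*}
\|Iu\|_{L^{p^*,p}(\Omega)}\;\geq\;\frac{\lambda}{(1+\varepsilon)^{1/p}}\,\|u\|_{V_0^{m,p}(\Omega)},
\end{equation*}
which extends by continuity to all of $Z$. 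Hence $I|_Z$ is bounded below, so it is an isomorphism of $Z$ onto $I(Z)$, contradicting strict singularity.

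There is no real obstacle here once one recognizes that the choice $q=p$ makes the lower bound of \myref{Proposition}{prop:scaling_invariance} sharp against the natural $V_0^{m,p}$-computation on disjointly supported functions; the whole proof is essentially an immediate consequence of the proposition. The reason the same argument fails for the Lebesgue target \eqref{prel:optimal_Leb} is that there $q=p^*>p$, and the coefficient norms no longer match, so one only gets a much weaker lower bound that is compatible with $b_n(I)\to 0$.
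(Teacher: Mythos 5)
Your proposal is correct and follows essentially the same approach as the paper's proof: both invoke \myref{Proposition}{prop:scaling_invariance} with $q=p$, exploit the exact matching between the $\ell_p$-norm of coefficients and the $V_0^{m,p}$-norm on disjointly supported functions, take $X_n=\operatorname{span}\{u_1,\dots,u_n\}$ to get $b_n(I)\ge\lambda(1+\varepsilon)^{-1/p}$, and pass to the limits in $\varepsilon$ and $\lambda$. Your treatment of the non--strict-singularity part is slightly more explicit (you take the closed span, identify it as an isometric copy of $\ell_p$, and extend the lower bound by continuity) than the paper's "arguing as above," but it is the same argument.
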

\begin{proof}
Let $\varepsilon > 0$ and $0 < \lambda < \|I\|$, and $\{u_j\}_{j = 1}^\infty \subseteq V_0^{m, p}(\Omega)$ be a system of functions from \myref{Proposition}{prop:scaling_invariance} with $q = p$.

Thanks to the property (S1) of (strict) $s$-numbers, it is sufficient to show that
\begin{equation*}
b_n(I) \geq \|I\| \qquad \text{for every $n\in\N$}.
\end{equation*}
Let $X_n$ be the subspace of $V_0^{m, p}(\Omega)$ spanned by the functions $u_1, \dots, u_n$. Since the functions $u_j$ have mutually disjoint supports, it follows that $\dim X_n = n$. Since the functions $u_j$ have mutually disjoint supports, we have, for every $u = \sum_{j = 1}^n \alpha_j u_j \in X_n$,
\begin{equation*}
\|u\|_{V_0^{m, p}(\Omega)}^p = \sum_{j = 1}^n |\alpha_j|^p.
\end{equation*}
Furthermore, thanks to \eqref{prop:scaling_invariance:extremal_system},
\begin{equation*}
\|u\|_{L^{p^*, p}(\Omega)}^p = \Big\|\sum_{j = 1}^n \alpha_j u_j \Big\|_{L^p(\Omega)}^p \geq \frac{\lambda^p}{1 + \varepsilon} \sum_{j = 1}^n |\alpha_j|^p.
\end{equation*}
Hence
\begin{equation*}
b_n(I) \geq \inf_{u\in X_n\setminus\{0\}} \frac{\|u\|_{L^{p^*, p}(\Omega)}}{\|u\|_{V_0^{m, p}(\Omega)}} \geq \frac{\lambda}{(1 + \varepsilon)^{\frac1{p}}}.
\end{equation*}
Since this holds for every $\varepsilon > 0$ and $0 < \lambda < \|I\|$, it follows that $b_n(I)\geq \|I\|$.

Finally, to show that $I$ is not strictly singular, it is sufficient to take any $\varepsilon > 0$ and $0 < \lambda < \|I\|$ and consider the infinite dimensional subspace of $V_0^{m, p}(\Omega)$ spanned by the functions $u_1, u_2, \dots$ Arguing as above, we immediately see that $I$ is bounded from below on this infinite dimensional subspace. Therefore, $I$ is not strictly singular.
\end{proof}

\begin{remark}
In light of \eqref{prel:bernstein_numbers_smallest_injective}, \eqref{thm:optimal_Sobolev_Lorentz_NOT_SS:Bernstein_eq} actually tells us that, in the case of the ``really optimal'' Sobolev embedding \eqref{prel:optimal_Lor}, we have
\begin{equation*}
s_n(I) = \|I\|
\end{equation*}
for every $n\in\N$ and every injective strict $s$-number $s$.
\end{remark}


\end{document}